\numberwithin{equation}{section}
\newtheorem*{theorem*}{Theorem}
\newtheorem{thm}{Theorem}
\newtheorem*{ack}{Acknowledgments}
\newtheorem{lem}{Lemma}
\newtheorem*{cor}{Corollary}
\title{\textbf{Notes}}
\title{On the best possible exponent for the error term in the lattice point counting problem on the first Heisenberg group }
\author{Yoav A. Gath\\ Faculty of Mathematics \\Technion, Israel Institute Of Technology}
\begin{document}
\maketitle

\begin{abstract}
We use classical methods from analytic number theory to resolve the lattice point counting problem on the first Heisenberg group, in the case where the gauge function is taken to be the Cygan-Kor$\acute{a}$nyi Heisenberg-norm $\mathcal{N}_{4,1}(z,w)=(|z|^{4}+w^{2})^{1/4}$. In this case, our main theorem establishes the estimate $\mathcal{E}(x)=\Omega_{\pm}(x^{\frac{1}{2}})$, where $\mathcal{E}(x)=\mathcal{S}(x)-\frac{\pi^{2}}{2}x$ is the error term arising in the lattice point counting problem, $\mathcal{S}(x)$ is given by
$$\mathcal{S}(x)=\sum_{0\leq m^2+n^2<\, x}r_2(m)$$
and $r_2(m)=|\{a,b\in\mathbb{Z}:\,a^{2}+b^{2}=m\}|$ is the familiar sum of squares function. As a corollary, we deduce that the exponent $\frac{1}{2}$ in the upper-bound $\left|\mathcal{E}(x)\right|\ll x^{\frac{1}{2}}\log{x}$ obtained by Garg, Nevo \& Taylor can not be improved and is thus best possible, thereby resolving the lattice point counting problem for the case in hand.
\end{abstract}
\maketitle
 
\tableofcontents
\section{Introduction, notation and statement of results}
\subsection{General counting principles}
\textit{\textbf{Counting lattice points in Euclidean space}}. Given a region $\mathcal{B}\subset \mathbb{R}^{n}$, the problem of estimating the number of integral points in large homothetic regions $|\mathbb{Z}^{n}\cap t\mathcal{B}|$ has a long history which can be traced back to Gauss and Dirichlet. Under suitable assumptions on $\mathcal{B}$, we expect the quantity $|\mathbb{Z}^{n}\cap t\mathcal{B}|$ to be well approximated by $\lambda(t\mathcal{B})$ $=$ vol(t$\mathcal{B}$), and we measure the resulting discrepancy arising from this approximation by analyzing the error term $\mathcal{E}(t)=|\mathbb{Z}^{n}\cap t\mathcal{B}|-\lambda(t\mathcal{B})$. To get a handle on the error term, we introduce the notion of an admissible exponent. Say $\alpha$ is an admissible exponent if one has the estimate $\big|\mathcal{E}(t)\big|\ll_{\epsilon}(\lambda(t\mathcal{B}))^{\alpha+\epsilon}$ for any $\epsilon>0$, as $t\to\infty$. Setting $\kappa=\inf_{\alpha}$, where the infimum is taken over all admissible exponents, the (Euclidean) lattice point counting problem is that of establishing an asymptotic estimate of the form
\begin{equation}\label{eq:1.1}
|\mathbb{Z}^{n}\cap t\mathcal{B}|=\lambda(t\mathcal{B})+\mathcal{O}_{\epsilon}(\lambda(t\mathcal{B}))^{\kappa+\epsilon}
\end{equation}
For any $\epsilon>0$.\\\\
Below we list three motivating examples which have played an important role in the development of this subject.\\
$\bullet$ Euclidean balls. In $\mathbb{R}^{2}$ one has $\kappa\leq \frac{131}{416}$ due to Huxley \cite{huxley2003exponential}, while the conjectured value is $\kappa=\frac{1}{4}$. In $\mathbb{R}^{3}$ one has $\kappa\leq \frac{21}{48}$ due to Heath-Brown \cite{heath1999lattice}, while the conjectured value is $\kappa=\frac{1}{3}$. For $n\geq 4$, the value of $\kappa$ has been obtained, and its value is $\kappa=1-\frac{2}{n}$. For more details we refer the reader to \cite{atzel1988lattice}.\\
$\bullet$ Euclidean dilates of smooth compact convex bodies in $\mathbb{R}^{n}$ whose boundary surface has everywhere non-vanishing Gaussian curvature. These include amongst others, ellipsoids and other bodies of revolution, and for the relevant results on the error estimates we refer the reader to \cite{hlawka1950integrale}, \cite{herz1962number}, \cite{chamizo1998lattice} and \cite{ivic2006lattice}.\\
$\bullet$ Euclidean dilates of bodies in $\mathbb{R}^{n}$ whose boundary surface contains points of vanishing Gaussian curvature. This is a difficult case, and in general very little is known, except for a handful of special cases such as the unit balls of $l^{p}$-norm and some generalizations. For these cases, the effect of vanishing curvature on the error estimates has been extensively investigated, and we refer the reader to \cite{kratzel1999lattice}, \cite{kratzel2002lattice1}, and \cite{kratzel2002lattice2}.\\\\
The lattice point counting problem that we shall consider in the present paper will take a similar form to that of (1.1), the crucial difference being the dilation used in the expansion process. Our interest in this change of setting is motivated by viewing our lattice point counting problem in a wider context, as we now explain.\\\\
\textit{\textbf{Counting points in lattice subgroups}}.
In view of \eqref{eq:1.1}, it is natural to consider the following considerably more general set-up. Suppose $\mathbf{G}\subset \mathbf{GL}_n$ is a connected linear algebraic group defined over $\mathbb{Q}$ such that $\Gamma=\mathbf{G}(\mathbb{Z})$, its collection of integral points, forms a lattice subgroup in the group $\textit{G}=\mathbf{G}(\mathbb{R})\subset \mathbf{GL}_n(\mathbb{R})$ of real points. Let $\mathcal{B}_{t}\subset\textit{G}$ be a family of
bounded Borel subsets parameterized by $t\in\mathbb{R}_{+}$ satisfying $\mu(\mathcal{B}_t)\to\infty$, where $\mu$ denotes the Haar measure on $\textit{G}$ normalized so to have measure $1$ on a fundamental domain of $\Gamma$ in $\textit{G}$. In analogy with the Euclidean case, one aims to establish an asymptotic estimate of the form 
\begin{equation}\label{eq:1.2}
|\Gamma\cap\mathcal{B}_t|=\mu(\mathcal{B}_t)+\mathcal{O}_{\epsilon}(\mu(\mathcal{B}_t))^{\kappa+\epsilon}
\end{equation}
for any $\epsilon>0$,  where $\kappa$ is taken to be the infimum over all admissible exponents.
\\\\There are numerous methods to construct interesting families $\mathcal{B}_{t}\subset \textit{G}$. One such method, which is the one we shall adopt, is to consider a gauge $\omega$ defined on $\textit{G}$, i.e a left-invariant pseudo-distance function, and then take $\mathcal{B}_{t}=\{g\in \textit{G}:\omega(g,e)<t\}$. Of particular interest is the case where the gauge is taken to be a (proper) left-invariant distance function. In this direction, let us mention that \eqref{eq:1.2} has never been established, even in a single case, for any left-invariant distance function on any non-compact simple Lie group. For more details, as well as for other interesting examples and related results on the error terms, we refer the reader to \cite{gorodnik2012counting}.\\\\
In the present paper we intend to establish \eqref{eq:1.2} for the case of the first Heisenberg group, where the gauge is taken to be the Cygan-Kor$\acute{a}$nyi Heisenberg-norm. In fact, we shall first state the lattice point counting problem on an arbitrary Heisenberg group with respect to a certain family of gauges. This family, consisting of the so called Heisenberg-norms, arises naturally through the action of the dilation and unitary groups, and was considered in \cite{garg2015lattice} where the reader may find an in-depth and broad treatment of the lattice point counting problem on the Heisenberg groups.\\\\
We now turn to describe the set-up for the lattice point counting problem considered in \cite{garg2015lattice}, and we shall do so in the context of \eqref{eq:1.2}. Thus, for the time being we consider an Heisenberg group of arbitrary dimension along with any Heisenberg norm defined on it.\\\\
\begin{ack}
I would like to express my deepest gratitude to my advisor Prof. Amos Nevo for his support and guidance throughout the writing of this paper.   
\end{ack}
\subsection{Counting lattice points on the Heisenberg group}
\textit{\textbf{The Heisenberg group}}. The $q$-th Heisenberg group $\mathcal{H}_{q}$ has several equivalent descriptions, one of which is given by
\begin{equation}
\mathcal{H}_{q}=\mathbb{R}^{q}\times\mathbb{R}^{q}\times\mathbb{R}=\{(x,y,w):\,x,y\in\mathbb{R}^{q},\,w\in\ \mathbb{R} \}
\end{equation}
where the group law is defined by $$(x,y,w)(x',y',w')=(x+x',y+y',w+w'+\langle x,y'\rangle)\,,$$
and $\langle\cdotp , \cdotp\rangle$ denotes the standard inner product on $\mathbb{R}^{q}$.\\\\
An equivalent realization of $\mathcal{H}_{q}$ is given by the isomorphic group
\begin{equation}
\mathcal{H}_{q}=\mathbb{C}^{q}\times\mathbb{R}=\{(z,w):\,z=x+iy\in \mathbb{C}^{q},\,w\in\ \mathbb{R}\}
\end{equation}
where the group law is defined by $$(z,w)(z',w')=(z+z',w+w'+2\Im(z\cdotp z'))$$
so that multiplication can also be described by the symplectic form:
$$(x,y,w)(x',y',w')=(x+x',y+y',w+w'+2(\langle x',y\rangle-\langle x,y'\rangle))\,.$$
Having defined $\mathcal{H}_{q}$, we proceed by introducing two important groups of automorphisms acting on it.  The first is the dilation group, which is parameterized by 
$$\mathcal{H}_{q}\ni(z,w)\mapsto\phi_{a}(z,w)=(az,a^{2}w)\,\,, \,\,a\in\mathbb{R}_{+}\,.$$
The second group of automorphisms is given by the action of the unitary group $\mathbf{U}_{q}(\mathbb{C})$, where 
$$\mathcal{H}_{q}\ni(z,w)\mapsto\Phi_{\mathcal{U}}(z,w)=(\mathcal{U}z,w)\,\,,\,\,\mathcal{U}\in\mathbf{U}_{q}(\mathbb{C})\,.$$
\textit{\textbf{Heisenberg-norms}}. The action of the dilation group gives rise to the natural notion of homogeneity. Say $\psi:\mathcal{H}_{q}\rightarrow\mathbb{C}$ is homogeneous of degree $d$ if $$\psi\circ\phi_{a}=a^{d}\psi,\,\,\text{for any}\,\, a\in\mathbb{R}_{+}\,.$$
Likewise, the action of the unitary group gives rise to the natural notion of radiality, where we say that $\Psi:\mathcal{H}_{q}\rightarrow\mathbb{C}$ is radial if $$\Psi\circ\Phi_{\mathcal{U}}=\Psi ,\,\,\text{for any}\,\,\mathcal{U}\in\mathbf{U}_{q}(\mathbb{C})\,.$$
Having the notion of homogeneity and radiality, we now turn to the definition of the Heisenberg-norms. These norms are parameterized by $\alpha,A>0$, and take the form
\begin{equation}
\mathcal{N}_{\alpha,A}(z,w)=(|z|^{\alpha}+A|w|^{\alpha/2})^{1/\alpha}\,.
\end{equation}
It is evident that the Heisenberg-norms $\mathcal{N}_{\alpha,A}$ are radial and homogeneous of degree $1$. Of particular importance is the norm $\mathcal{N}_{4,1}$, the so called Cygan-Kor$\acute{a}$nyi Heisenberg-norm. This norm was considered by Cygan \cite{cygan1981subadditivity},  \cite{cygan1978wiener} and Kor$\acute{a}$nyi \cite{koranyi1985geometric}. Amongst its many properties, this gauge satisfies the inequality $\mathcal{N}_{4,1}(u\cdotp v)\leq \mathcal{N}_{4,1}(u)+\mathcal{N}_{4,1}(v)$ for all $u,v\in\mathcal{H}_{q}$, making the map $\delta(u,v)=\mathcal{N}_{4,1}(v^{-1}\cdotp u)$ a left-invariant distance function. One may also find this norm appearing in the context of harmonic analysis on the Heisenberg group. For example, it appears in the expression defining the fundamental solution of a natural sublaplacian on the Heisenberg group and in other natural kernels, and we refer the reader to \cite{stein1999harmonic} and  \cite{cowling2010unitary}. For us, the significance of the Cygan-Kor$\acute{a}$nyi Heisenberg-norm will be apparent through the use of Lemma 1 in the proof of the main theorem. \\\\ 
\textit{\textbf{Counting lattice points inside Heisenberg-dilated bodies}}. Having the relevant notation and definitions at hand, we now proceed to describe the lattice point counting problem considered in \cite{garg2015lattice}. Since $\mathcal{H}_{q}$ is parameterized by $\mathbb{R}^{2q+1}$, its lattice subgroup of integral points is parameterized by $\mathbb{Z}^{2q+1}$, and for the Haar measure on $\mathcal{H}_{q}$ we take the Lebesgue measure $\lambda=\lambda_{2q+1}$ on $\mathbb{R}^{2q+1}$. Next, for $\alpha,A>0$ we write $\mathcal{B}^{\alpha,A}_{t}\subset\mathbb{R}^{2q+1}$ for the $\mathcal{N}_{\alpha,A}$ ball of radius $t$ in $\mathcal{H}_{q}$, which is also the Heisenberg dilate by $t$ of the unit ball 
$$\mathcal{B}^{\alpha,A}_{t}=\phi_{t}(\mathcal{B}^{\alpha,A}_{1})\,.$$
Let us note that $\lambda$ scales under dilations according to the homogeneous dimension, not the Euclidean dimension, so in particular
$$\lambda(\mathcal{B}^{\alpha,A}_{t})=t^{2q+2}\lambda(\mathcal{B}^{\alpha,A}_{1})\,.$$
Thus, in the set-up of \eqref{eq:1.2}, the lattice point counting problem is that of establishing an asymptotic estimate of the form
\begin{equation}\label{eq:1.6}
|\mathbb{Z}^{2q+1}\cap\mathcal{B}^{\alpha,A}_{t}|=\lambda(\mathcal{B}^{\alpha,A}_{t})+\mathcal{O}_{\epsilon}(\lambda(\mathcal{B}^{\alpha,A}_{t}))^{\kappa+\epsilon}
\end{equation}
for any $\epsilon>0$, where $\kappa$ is taken to be the infimum over all admissible exponents. In \eqref{eq:1.6}, the involved parameters are assumed to be fixed so we do not display their dependence in the error term.\\\\
In view of \eqref{eq:1.6},  the counting problem on $\mathcal{H}_{q}$ is that of counting integral points in the Euclidean lattice $\mathbb{Z}^{2q+1}$ contained in the family of increasing bodies $\phi_{t}(\mathcal{B}^{\alpha,A}_{1})=\mathcal{B}^{\alpha,A}_{t}\subset\mathbb{R}^{2q+1}$
as $t\to\infty$. Note that the Heisenberg dilations used to expand $\mathcal{B}^{\alpha,A}_{1}$ are materially different from the Euclidean dilations. Also, one needs to obtain a good understanding of the geometric properties of these bodies, as they play a crucial role in the analysis of the error term. For example, it turns out that $\mathcal{B}^{\alpha,A}_{1}$ is Euclidean convex if and only if $\alpha\geq 2$, and its boundary surface contains points of vanishing  Gaussian curvature for values of $\alpha\geq 2$. For further details, we refer the reader to \cite{garg2015lattice}.
\subsection{Statement of results}
We shall now confine ourselves to the case of $\mathcal{H}_{1}$ where the norm will be taken to be the Cygan-Kor$\acute{a}$nyi Heisenberg-norm $\mathcal{N}_{4,1}$. In \cite{garg2015lattice}, the upper-bound $\kappa\leq\frac{1}{2}$ has been established,  and we shall prove the opposite inequality $\kappa\geq\frac{1}{2}$, thereby establishing \eqref{eq:1.6} with the value of $\kappa=\frac{1}{2}$. We begin by introducing some notations.\\\\ 
Given a parameter $t\geq1$, we have the following equality $$|\mathbb{Z}^{3}\cap\mathcal{B}^{4,1}_{t}|=\sum_{0\leq m^2+n^2<\, t^{4}}r_2(m)\,,$$
where $r_2(m)=|\{a,b\in\mathbb{Z}:\,a^{2}+b^{2}=m\}|$.\\\\
Setting $x=t^{4}$, the counting problem is that of estimating the sum
\begin{equation}
\mathcal{S}(x)=\sum_{0\leq m^2+n^2<\, x}r_2(m)\,.
\end{equation}
Since $\mathcal{S}(x)\sim \frac{\pi^{2}}{2}x$ (to see this, first execute the summation over the variable $n$, and then proceed by applying partial summation), the error term takes the following form
\begin{equation}\label{eq:1.8}
\mathcal{E}(x)=\mathcal{S}(x)-\frac{\pi^{2}}{2}x.
\end{equation}
We are now ready to state our main results.
\begin{thm}
With $\mathcal{E}(x)$ as above, we have:
\begin{equation}
\limsup_{x\to\infty}\frac{\mathcal{E}(x)}{x^{\frac{1}{2}}}=\infty
\end{equation}
\begin{equation}
\liminf_{x\to\infty}\frac{\mathcal{E}(x)}{x^{\frac{1}{2}}}=-\infty
\end{equation}
\end{thm}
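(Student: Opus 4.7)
The plan is to mimic Hardy's classical $\Omega$-argument for the Gauss circle problem, reducing the three-dimensional lattice count to a one-dimensional oscillating sum. Slicing in the variable $n$ with $N(T)=\sum_{0\le m\le T}r_{2}(m)=\pi T+\Delta(T)$ (so that $\Delta$ is the circle-problem error), one first obtains
$$\mathcal{S}(x)=\sum_{|n|<\sqrt{x}}N\bigl(\sqrt{x-n^{2}}\bigr)+O(1).$$
Extracting the main term via Poisson summation on $\pi\sum_{|n|<\sqrt{x}}\sqrt{x-n^{2}}$ (via the Bessel $J_{1}$ asymptotic) gives $\tfrac{\pi^{2}}{2}x$ plus $O(x^{1/4})$, so
$$\mathcal{E}(x)=\sum_{|n|<\sqrt{x}}\Delta\bigl(\sqrt{x-n^{2}}\bigr)+O(x^{1/4}).$$

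I then substitute the truncated Hardy--Voronoi formula $\Delta(T)=\tfrac{T^{1/4}}{\pi}\sum_{k\le K}r_{2}(k)k^{-3/4}\cos(2\pi\sqrt{kT}-\tfrac{3\pi}{4})+E(T,K)$, switch the order of summation, and for each fixed $k$ evaluate the inner sum over $n$ by stationary phase. The phase $\varphi_{k}(t)=2\pi\sqrt{k}(x-t^{2})^{1/4}$ has a unique non-degenerate stationary point at $t=0$ (geometrically, the equator of $\mathcal{B}^{4,1}_{1}$, where the Gaussian curvature is positive and finite), yielding the contribution $-\sqrt{2}\,x^{1/2}k^{-1/4}\cos(2\pi\sqrt{k}\,x^{1/4})$. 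Summing over $k$ with $K=K(x)\to\infty$ chosen appropriately,
$$\mathcal{E}(x)=-\frac{\sqrt{2}}{\pi}\,x^{1/2}\sum_{k\le K}\frac{r_{2}(k)}{k}\cos\bigl(2\pi\sqrt{k}\,x^{1/4}\bigr)+o\bigl(x^{1/2}\bigr).$$

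Setting $y=x^{1/4}$ and $F_{K}(y)=\sum_{k\le K}r_{2}(k)k^{-1}\cos(2\pi\sqrt{k}y)$, the theorem reduces to $\sup_{y}F_{K}(y)\to+\infty$ and $\inf_{y}F_{K}(y)\to-\infty$ as $K\to\infty$. Because $\{1\}\cup\{\sqrt{d}:d\ge 2\text{ squarefree}\}$ is $\mathbb{Q}$-linearly independent, Kronecker's theorem supplies arbitrarily large $y$ with every $\sqrt{k}y$ close to an integer modulo $1$ for $k\le K$; Abel summation from $R_{2}(K)=\pi K+O(K^{1/3})$ gives $\sum_{k\le K}r_{2}(k)/k=\pi\log K+O(1)$, and hence $F_{K}(y)\ge(1-\varepsilon)\pi\log K$ is attainable. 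Applying Kronecker instead with target $\tfrac12$ in every coordinate, one finds arbitrarily large $\Delta$ with $\sqrt{d}\,\Delta\equiv\tfrac12\pmod 1$ for each relevant squarefree $d$ (including $d=1$); writing $k=m_{k}^{2}d_{k}$ with $d_{k}$ squarefree and using the identity $r_{2}(4k)=r_{2}(k)$,
$$F_{K}(\Delta)=\sum_{k\le K}\frac{r_{2}(k)}{k}(-1)^{m_{k}}=2\!\sum_{\substack{k\le K\\ 4\mid k}}\frac{r_{2}(k)}{k}-\sum_{k\le K}\frac{r_{2}(k)}{k}=-\frac{\pi}{2}\log K+O(1).$$
Thus $\min F_{K}\to-\infty$ as well, and the minus sign in front of $F_{K}$ in the expression for $\mathcal{E}(x)$ delivers $\limsup_{x\to\infty}\mathcal{E}(x)/x^{1/2}=+\infty$ and $\liminf_{x\to\infty}\mathcal{E}(x)/x^{1/2}=-\infty$.

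The principal obstacle will be Step 2: uniformly controlling the Voronoi remainder $E(T,K)$, the non-zero-frequency Poisson corrections in the $n$-sum, and especially the boundary contributions from values of $n$ with $x-n^{2}$ small (where the Voronoi expansion degenerates), so that all these are $o(x^{1/2})$ while the main oscillation retains its magnitude $\sim x^{1/2}\log K$. A possibly cleaner alternative is to apply three-dimensional Poisson summation directly to $|\mathbb{Z}^{3}\cap\mathcal{B}^{4,1}_{x^{1/4}}|$: the Fourier transform $\widehat{\chi}_{\mathcal{B}^{4,1}_{1}}$ decays as $|\xi|^{-2}$ along the equatorial plane and only as $|\eta|^{-3/2}$ along the polar direction, the latter from the vanishing Gaussian curvature at the two poles $(0,0,\pm 1)$; the equatorial modes then produce exactly the oscillating sum $F_{K}$, while the polar modes contribute only $O(x^{1/4})$.
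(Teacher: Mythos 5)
Your heuristic is sound, and the Diophantine endgame is correct: the linear independence over $\mathbb{Q}$ of $\{\sqrt{d}:d\ \text{squarefree}\}$, Kronecker with targets $0$ and $\tfrac12$, and the identity $r_2(4k)=r_2(k)$ do give $\sup_yF_K(y)\sim\pi\log K$ and $\inf_yF_K(y)\le-\tfrac{\pi}{2}\log K+O(1)$, which would even yield a quantitatively stronger statement than the theorem. But there is a genuine gap exactly where you flag one, and that gap is the entire analytic content of the theorem. The identity
$$\mathcal{E}(x)=-\frac{\sqrt{2}}{\pi}\,x^{1/2}\sum_{k\le K}\frac{r_2(k)}{k}\cos\bigl(2\pi\sqrt{k}\,x^{1/4}\bigr)+o\bigl(x^{1/2}\bigr)$$
cannot hold pointwise with error $o(x^{1/2})$ for any fixed $K$: since $\sum_k r_2(k)/k$ diverges, the omitted tail $\sum_{k>K}$ has partial sums of unbounded sup norm, i.e. the "error" is of the same order as the oscillation you want to detect. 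To push the Voronoi remainder below $x^{1/2}/\log x$ using $E(T,K)\ll T^{1/2+\epsilon}K^{-1/2}$ summed over $\asymp\sqrt{x}$ slices forces $K\gg x^{1/2+\epsilon}$; then $K$ depends on $x$, Kronecker no longer applies to a fixed frequency set, and you must either run a Hardy--Littlewood resonance/convolution argument controlling the tail beyond a fixed $K_0$ in mean square along the resonant set, or carry out the stationary-phase error analysis (including the degenerate slices with $x-n^2$ small and the non-zero Poisson frequencies) uniformly in $k\le x^{1/2+\epsilon}$. Nothing in the proposal supplies either, and your own closing sentence concedes it.

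It is worth seeing how the paper sidesteps this. Writing $(x-m^2)^{1/2}=(\sqrt{x}-m)^{1/2}(\sqrt{x}+m)^{1/2}$ and Taylor-expanding the second factor about $2\sqrt{x}$ (Lemma 1) converts $\mathcal{M}(x)=\sum_m r_2(m)(x-m^2)^{1/2}$ into a rapidly convergent combination of Riesz means $\sum_{m\le\sqrt{x}}r_2(m)(\sqrt{x}-m)^{n+1/2}$. The terms $n\ge1$ contribute $c_nx+\mathcal{O}(2^{-n}x^{1/4})$ by a routine contour shift, and the single term $n=0$ is the Riesz mean of critical index $\varrho=\tfrac12$, exactly the threshold at which $\sum_k r_2(k)k^{-(\varrho/2+3/4)}$ diverges; its $\Omega_\pm$ oscillation is then quoted from Chandrasekharan--Narasimhan, so the hard resonance argument is outsourced rather than redone. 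Their $E_{1/2}(\sqrt{x})$ has precisely the expansion $x^{1/4}\sum_k r_2(k)k^{-1}\cos(2\pi\sqrt{k}x^{1/4}+\theta)$, so both routes land on the same oscillating series; yours reaches it by slicing and stationary phase, the paper's by a factorization chosen to match the Gamma factors in the functional equation of $Z(s)$. If you want to complete your version independently, replace the pointwise truncation by the smoothed/convolution method of Hardy and Hafner applied directly to $F_K$.
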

In \cite{garg2015lattice}, the upper-bound $\left|\mathcal{E}(x)\right|\ll x^{\frac{1}{2}}\log{x}$ has been established, hence as an immediate corollary we deduce: 
\begin{cor}
The exponent $\frac{1}{2}$ in the upper-bound $\left|\mathcal{E}(x)\right|\ll x^{\frac{1}{2}}\log{x}$ obtained by Garg, Nevo \& Taylor can not be improved and is thus best possible, thereby establishing \eqref{eq:1.6} with the value of $\kappa=\frac{1}{2}$.
\end{cor}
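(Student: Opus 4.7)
My plan is to reduce the three-dimensional count $\mathcal{S}(x)$ to the classical Gauss circle problem by slicing along the central variable, derive an explicit Voronoi-type formula for $\mathcal{E}(x)$ of the shape
$$\mathcal{E}(x)=-\frac{\sqrt{2}}{\pi}\,x^{1/2}\sum_{k\ge 1}\frac{r_2(k)}{k}\cos\bigl(2\pi\sqrt{k}\,x^{1/4}\bigr)+o(x^{1/2}),$$
and then extract the two $\Omega_\pm$-statements from the logarithmic divergence $\sum_{k\le N}r_2(k)/k=\pi\log N+O(1)$ via a Dirichlet simultaneous-approximation argument.

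The slicing writes $\mathcal{S}(x)=\sum_{n^2<x}G\bigl((x-n^2)^{1/4}\bigr)$, where $G(R)=\#\{(a,b)\in\mathbb{Z}^2:a^2+b^2<R^2\}$ is the Gauss circle counting function. Splitting $G(R)=\pi R^2+E(R)$ and handling the smooth part by Poisson summation in $n$ produces $\pi\sum_{n^2<x}\sqrt{x-n^2}=\tfrac{\pi^2}{2}x+O(x^{1/4})$, so the problem reduces to the asymptotic analysis of $\sum_{n^2<x}E\bigl((x-n^2)^{1/4}\bigr)$. I would then insert the Hardy identity $E(R)=R\sum_{k\ge 1}r_2(k)k^{-1/2}J_1(2\pi R\sqrt{k})$, interchange summation after a truncation $k\le K(x)$, and apply stationary phase in the variable $n$. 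The phase $\phi_k(t)=2\pi\sqrt{k}\,(x-t^2)^{1/4}$ has a unique critical point at $t=0$ with $\phi_k''(0)=-\pi\sqrt{k}\,x^{-3/4}$, and combining stationary phase with the large-argument expansion of $J_1$ produces the displayed explicit formula above.

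For the $\Omega_\pm$-conclusions I would reduce matters to showing that the partial sum $\Psi_N(y):=\sum_{k\le N}r_2(k)k^{-1}\cos(2\pi\sqrt{k}\,y)$ can be driven above $(1-o(1))\pi\log N$, respectively below $-(1-o(1))\pi\log N$, at arbitrarily large $y$. Since $\{\sqrt{k}:k\le N,\,k\text{ squarefree}\}$ is $\mathbb{Q}$-linearly independent, Dirichlet's simultaneous approximation theorem produces, for every $N$ and every $Y_0>0$, a value $y\ge Y_0$ at which $\|\sqrt{k}\,y\|_{\mathbb{R}/\mathbb{Z}}<\varepsilon_N$ for every $k\le N$; shifting the approximation targets to $\tfrac{1}{2}$ yields the opposite-sign construction. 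Taking $x=y^4$ and substituting into the explicit formula then gives $\mathcal{E}(x)/x^{1/2}\to\mp\infty$ along the two sequences, which is the theorem.

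The principal technical obstacle is the tail of the Voronoi expansion: because $\sum_k r_2(k)/k$ is not absolutely convergent, one cannot absorb $\sum_{k>K(x)}$ trivially, and on the Diophantine sequences above the tail could in principle conspire to cancel the logarithm extracted from the head. I would handle this by pairing the choice of $K(x)$ (placed just below the Diophantine scale $N$, so that the truncation error is genuinely $o(x^{1/2})$) with a mean-square (or van der Corput) estimate for the tail on intervals $y\in[Y_0,2Y_0]$, ensuring that the set of $y$ on which the tail is $O(1)$ has positive relative measure; this set intersects the Dirichlet set and selects the desired sequence. Calibrating these two scales so that all error terms stay strictly $o(x^{1/2})$ while the head retains its logarithmic growth is the central bookkeeping step of the proof.
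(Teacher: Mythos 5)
Your route is genuinely different from the paper's: you slice along the central variable to reach $\sum_{n^2<x}E((x-n^2)^{1/4})$ with $E$ the Gauss circle error, then use Hardy's identity and stationary phase, whereas the paper sums over the other variable first, converts $(x-m^2)^{1/2}$ into a series of Riesz means $\sum_m r_2(m)(\sqrt{x}-m)^{n+\frac12}$ (Lemma 1), and imports the $\Omega_\pm$ statement for the critical Riesz mean $E_{1/2}$ as a black box from Chandrasekharan--Narasimhan. Your stationary-phase computation does yield the correct order $x^{1/2}r_2(k)/k$ per frequency, and the divergence $\sum_{k\le N}r_2(k)/k\sim\pi\log N$ is indeed the engine behind the Chandrasekharan--Narasimhan theorem. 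But two steps of your plan fail as stated, and they are not bookkeeping.

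First, the $\Omega_-$ direction. You cannot drive all of $\cos(2\pi\sqrt{k}\,y)$, $k\le N$, near $-1$: if $k=4k_0$ then $\sqrt{k}\,y=2\sqrt{k_0}\,y$, so $\|\sqrt{k_0}\,y-\tfrac12\|$ small forces $\cos(2\pi\sqrt{k}\,y)\approx +1$; the multiplicative relations $\sqrt{m^2k_0}=m\sqrt{k_0}$ make the inhomogeneous targets mutually inconsistent. (Moreover, inhomogeneous simultaneous approximation is Kronecker's theorem, not Dirichlet's, and applies only to the $\mathbb{Q}$-independent squarefree subfamily; you would then still need to show that the resulting signed sum over square parts is $\asymp -\log N$, which is an unverified computation.) The standard way to get both signs at the critical index is not Diophantine: one assumes a one-sided bound, integrates against a nonnegative kernel, and contradicts the divergence of $\sum r_2(k)/k$ --- which is essentially what Chandrasekharan--Narasimhan prove and what the paper invokes. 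Second, the tail. A set of positive relative measure on which the tail is $O(1)$ need not meet the Dirichlet set, whose measure is of order $\varepsilon_N^{\,r}$ with $r$ the number of frequencies --- far smaller than the exceptional set excluded by any mean-square bound. This is exactly the obstruction that makes $\Omega_+$ results for the circle problem delicate (Gangadharan, Hafner), and your calibration of $K(x)$ against $N$ does not resolve it. Relatedly, your ``explicit formula'' with a full infinite series and a uniform $o(x^{1/2})$ error cannot hold as written: the coefficients $r_2(k)/k$ are not absolutely summable, and the known bound $|\mathcal{E}(x)|\ll x^{1/2}\log x$ already shows the partial sums can grow logarithmically, so only a truncated identity with a truncation-dependent error is available. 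Note finally that the Corollary itself is immediate from Theorem 1 together with the Garg--Nevo--Taylor upper bound; what you are really attempting is a from-scratch reproof of Theorem 1, and the two gaps above are precisely the hard points that the paper outsources to the cited $\Omega_\pm$ theorem for $E_{1/2}$.
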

\textit{\textbf{On the method of proof}}. One may approach the sum $\mathcal{S}(x)$ in several ways, one of which is by first replacing the sharp cutoff function $\chi_{[0,x)}$ by an appropriately chosen smooth function of compact support,  and then apply Poisson summation for the smoothed sum. The zero frequency gives rise to the main term, and one is left with estimating the tail. As we wish to retain the true order of magnitude of $\mathcal{E}(x)$, we opt to choose a different approach as we now explain.\\\\
First we note that $2\mathcal{M}(x)$ differs from $\mathcal{S}(x)$ by an amount of size at most $\mathcal{O}(x^{\frac{1}{2}})$, where 
\begin{equation}\label{eq:1.11}
\mathcal{M}(x)=\sum_{0\leq m\leq\, \sqrt[]{x}}r_2(m)(x-m^{2})^{\frac{1}{2}}\,.
\end{equation}
Thus, our theorem will follow if we can prove that the error term arising from the approximation of $\mathcal{M}(x)$ obeys the two sided estimate $\Omega_{\pm}(x^{\frac{1}{2}})$.\\\\
The presence of $(x-m^{2})^{\frac{1}{2}}$ in the above sum is more than welcome, as it serves as a smoothing factor. Already at this stage, by applying standard methods of contour integration one can establish the bound $\left|\mathcal{E}(x)\right|\ll x^{\frac{1}{2}}(\log{x})^{C}$ for some constant $C>0$. By performing a more careful analysis, one could obtain a closed form expression for the error term arising in the estimation of \eqref{eq:1.11}, however it turns out to be difficult to extract any information as to its true order of magnitude. The reason behind this difficulty is that the variable of summation $m$ appears in $(x-m^{2})^{\frac{1}{2}}$ with a squared factor. To see this more clearly, by Mellin inversion with $\sigma>1$, we have
\begin{equation}\label{eq:1.12}
M(x)\,=\,\sqrt[]{x}+ \frac{c}{2\pi i}\int_{(\sigma)}^{}\frac{\Gamma(\frac{s}{2})}{\Gamma(\frac{s+3}{2})} x^{\frac{s+1}{2}}Z(s)ds
\end{equation}
for some constant $c$, where
$$Z(s)=\sum_{m=1}^{\infty}\frac{r_2(m)}{m^{s}}.$$
The Gamma factor in \eqref{eq:1.12} does not match the one appearing in the functional equation for $Z(s)$, and this will cause some complications later on. To make the Gamma factors match, we first use Lemma 1 stated below to remove this square factor, and only then do we proceed to the evaluation of \eqref{eq:1.12}, leading us to the following sum
\begin{equation}\label{eq:1.13}
\mathcal{S}(\,\sqrt[]{x}\,;n)=\frac{(-1)^{n}}{n!}f^{(n)}(2x^{\frac{1}{2}})\sum_{0\leq m\leq\,\sqrt[]{x}}r_2(m)(\sqrt[]{x}-m)^{n+\frac{1}{2}}
\end{equation}
where $f(t)=t^{\frac{1}{2}}$, and $n$ runs through nonnegative integers .\\\\
The sum appearing in \eqref{eq:1.13} will be handled by Lemma 2 stated below, and we shall estimate it within an error term of size $\mathcal{O}(2^{-n}x^{\frac{1}{4}})$ uniformly in $n\geq 1$. The error term arising in the approximation of $\mathcal{S}(\,\sqrt[]{x}\,;0)$ will obey the estimate $\Omega_{\pm}(x^{\frac{1}{2}})$ which is exactly what we need. Having Lemmas 1 and 2 at our disposal, we proceed to the proof of Theorem 1.
\section{Preparatory Lemmas}
\begin{lem}
Let $f(t)=t^{\frac{1}{2}}$, and fix some $x\geq 1$. Then for $0\leq m\leq\,\sqrt[]{x}$ a non negative integer we have:
$$(x-m^{2})^{\frac{1}{2}}=\sum_{n=0}^{\infty}\frac{(-1)^{n}}{n!}f^{(n)}(2x^{\frac{1}{2}})(x^{\frac{1}{2}}-m)^{n+\frac{1}{2}}\,.$$
\end{lem}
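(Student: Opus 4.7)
The plan is to reduce the identity to a single application of the binomial series. The starting point is the factorization
$$(x-m^{2})^{\frac{1}{2}} = (\sqrt{x}-m)^{\frac{1}{2}}(\sqrt{x}+m)^{\frac{1}{2}},$$
which is valid for $0\le m\le\sqrt{x}$. The first factor already has the shape $(\sqrt{x}-m)^{n+1/2}$ with $n=0$, so the whole task is to expand the second factor as a power series in $\sqrt{x}-m$ with coefficients given by the derivatives of $f$ at the point $2\sqrt{x}$.

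For this I would write $\sqrt{x}+m = 2\sqrt{x} - (\sqrt{x}-m)$ and set $y=\sqrt{x}-m$, so that
$$(\sqrt{x}+m)^{\frac{1}{2}} = f(2\sqrt{x}-y).$$
Taylor expanding $f$ about the point $t_{0}=2\sqrt{x}$ with increment $-y$ gives
$$f(2\sqrt{x}-y) \;=\; \sum_{n=0}^{\infty}\frac{f^{(n)}(2\sqrt{x})}{n!}(-y)^{n} \;=\; \sum_{n=0}^{\infty}\frac{(-1)^{n}}{n!}f^{(n)}(2\sqrt{x})\, y^{n}.$$
Multiplying both sides by $y^{1/2}=(\sqrt{x}-m)^{1/2}$ and restoring $y=\sqrt{x}-m$ yields the claimed identity termwise.

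The only real content is the convergence of the Taylor series, since pulling a factor of $(2\sqrt{x})^{1/2}$ outside turns it into the binomial series for $(1-z)^{1/2}$ evaluated at $z=y/(2\sqrt{x})$. For $0\le m\le\sqrt{x}$ one has $y\in[0,\sqrt{x}]$, hence $z\in[0,\tfrac12]$, which is well inside the unit disk where the binomial series converges absolutely (in fact the coefficients $\binom{1/2}{n}$ decay like $n^{-3/2}$, so one even has absolute convergence up to $|z|=1$). This comfortable bound on $z$ is what justifies the interchange of the two operations and what makes the expansion uniformly valid over the entire summation range $0\le m\le\sqrt{x}$ in the subsequent application of the lemma.

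There is no essential obstacle here; the lemma is really just a clean repackaging of the binomial theorem tailored so that the summation variable $m$ in the outer sum $\mathcal{M}(x)$ appears linearly, rather than quadratically, inside the resulting Mellin integrand. It is precisely this linearization that allows the author, in the next step, to match Gamma factors with the functional equation of $Z(s)$.
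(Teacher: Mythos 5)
Your proposal is correct and follows essentially the same route as the paper: factor $(x-m^{2})^{1/2}=(\sqrt{x}-m)^{1/2}(\sqrt{x}+m)^{1/2}$ and Taylor-expand $f(\sqrt{x}+m)=f(2\sqrt{x}-(\sqrt{x}-m))$ about $2\sqrt{x}$. The only difference is cosmetic (the paper splits off the $n=0$ term before expanding), and your explicit convergence check via the binomial series at $z=y/(2\sqrt{x})\in[0,\tfrac12]$ is a welcome detail the paper leaves implicit.
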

\begin{proof}
Let $0\leq m\leq\,\sqrt[]{x}$ be a non negative integer. We first note that
$$(x-m^{2})^{\frac{1}{2}}=f(2x^{\frac{1}{2}})f(x^{\frac{1}{2}}-m)+f(x^{\frac{1}{2}}-m)(f(x^{\frac{1}{2}}+m)-f(2x^{\frac{1}{2}}))\,.$$
Next, we expand the function $f$ around $2x^{\frac{1}{2}}$ getting
$$f(x^{\frac{1}{2}}+m)-f(2x^{\frac{1}{2}})=\sum_{n=1}^{\infty}\frac{(-1)^{n}}{n!}f^{(n)}(2x^{\frac{1}{2}})(x^{\frac{1}{2}}-m)^{n}\,.$$
Inserting this into the above equation gives
\[(x-m^{2})^{\frac{1}{2}}=\sum_{n=0}^{\infty}\frac{(-1)^{n}}{n!}f^{(n)}(2x^{\frac{1}{2}})(x^{\frac{1}{2}}-m)^{n+\frac{1}{2}}\]
as claimed.
\end{proof}
\begin{lem}
Let $x\in \mathbb{R}_{+}$ be large. For $n\in\mathbb{N}_{\geq 0}$ define:
$$\mathcal{S}(\,\sqrt[]{x}\,;n)=\frac{(-1)^{n}}{n!}f^{(n)}(2x^{\frac{1}{2}})\sum_{0\leq m\leq\,\sqrt[]{x}}r_2(m)(\sqrt[]{x}-m)^{n+\frac{1}{2}}\,.$$
Then for $n\geq 1$ we have
\begin{equation}\label{eq:2.1}
\mathcal{S}(\,\sqrt[]{x}\,;n)=c_nx +\mathcal{O}(2^{-n}x^{\frac{1}{4}})
\end{equation}
where the coefficients $c_n$ are given by
\begin{equation}\label{eq:2.2}
c_n=-\pi\frac{\prod_{k=1}^{n-1}(1-\frac{1}{2k})}{\sqrt[]{2}\,2^{n}n(n+\frac{3}{2})}
\end{equation}
and the implied constant is absolute.\\\\
For $n=0$ we have
\begin{equation}\label{eq:2.3}
\mathcal{S}(\,\sqrt[]{x}\,;0)=c_0x + x^{\frac{1}{2}}Q(\,\sqrt[]{x}\,)
\end{equation}
with $c_0=\frac{2^{\frac{3}{2}}\pi}{3}$. The second term on the RHS of \eqref{eq:2.3} satisfies
\begin{equation}\label{eq:2.4}
\limsup_{x\to\infty}Q(\,\sqrt[]{x}\,)=\infty,\,\,\,\liminf_{x\to\infty}Q(\,\sqrt[]{x}\,)=-\infty
\end{equation}
and
\begin{equation}\label{eq:2.5}
\big|Q(\,\sqrt[]{x}\,)\big|\ll x^{\frac{1}{4}}
\end{equation}
\end{lem}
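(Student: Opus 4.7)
The plan is to analyze the inner sum
$$T(y;n):=\sum_{0\le m\le y} r_2(m)(y-m)^{n+1/2}\qquad (y=\sqrt{x})$$
by Mellin--Perron inversion, using the Dirichlet series $Z(s)=\sum_{m\ge 1}r_2(m)m^{-s}=4\zeta(s)L(s,\chi_{-4})$. For $\sigma>1$ the standard Riesz-mean identity gives
$$T(y;n)-r_2(0)\,y^{n+1/2}=\frac{\Gamma(n+3/2)}{2\pi i}\int_{(\sigma)}Z(s)\frac{\Gamma(s)}{\Gamma(s+n+3/2)}y^{s+n+1/2}\,ds,$$
while the outer prefactor admits the explicit closed form $\tfrac{(-1)^n}{n!}f^{(n)}(2\sqrt{x})=(-1)^n\binom{1/2}{n}(2\sqrt{x})^{1/2-n}$. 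The two pieces are engineered so that the Gamma ratio matches the one supplied by the functional equation of $Z$; this matching is precisely the payoff of Lemma 1.

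For $n\ge 1$ I would shift the contour to the line $\Re s=-1/2$. Two simple poles are crossed: the pole of $Z$ at $s=1$ (residue $\pi$) contributes $\pi y^{n+3/2}/(n+3/2)$, while the pole of $\Gamma(s)$ at $s=0$ contributes $Z(0)\,y^{n+1/2}=-y^{n+1/2}$, which exactly cancels the isolated $m=0$ term $r_2(0)y^{n+1/2}$ set aside above. Multiplying the remaining main contribution by the prefactor, substituting $y=\sqrt{x}$, and simplifying via the elementary identity
$$(-1)^n\binom{1/2}{n}=-\frac{1}{2n}\prod_{k=1}^{n-1}\left(1-\frac{1}{2k}\right),$$
one recovers exactly $c_n x$ in the form \eqref{eq:2.2}. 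The shifted integral is bounded using Stirling, $|\Gamma(s)/\Gamma(s+n+3/2)|\ll(1+|t|)^{-(n+3/2)}$, together with the convexity bound $|Z(-1/2+it)|\ll(1+|t|)^{C}$ supplied by the functional equation; the integral is then $\ll\Gamma(n+3/2)\,y^n$, and multiplying by $|A_n|\asymp 2^{-n}n^{-3/2}x^{1/4-n/2}$ (Stirling for $\binom{1/2}{n}$) yields the claimed uniform error $\mathcal{O}(2^{-n}x^{1/4})$.

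For $n=0$ the same residue computation delivers $c_0=2^{3/2}\pi/3$. The pointwise bound $|Q(\sqrt{x})|\ll x^{1/4}$ in \eqref{eq:2.5} follows from the Stieltjes integration-by-parts representation
$$T(y;0)-\tfrac{2\pi}{3}y^{3/2}+y^{1/2}=\tfrac{1}{2}\int_0^y\Delta(u)(y-u)^{-1/2}\,du,$$
where $\Delta(u)=\sum_{m\le u}r_2(m)-\pi u$, upon inserting any classical circle-problem bound such as $\Delta(u)=\mathcal{O}(u^{1/3})$. For the two-sided statement \eqref{eq:2.4}, I would push the Mellin contour further left across the critical strip and apply the functional equation of $Z$ to obtain a Voronoi-type expansion of the same quantity in Bessel functions $J_1(4\pi\sqrt{my})$; the large-argument asymptotics of $J_1$ then express $Q(\sqrt{x})$, up to a bounded smooth piece, as a constant multiple of
$$y^{1/4}\sum_{m\ge 1}\frac{r_2(m)}{m^{5/4}}\cos\!\left(4\pi\sqrt{my}+\varphi\right).$$
Since $\sum r_2(m)/m^{5/4}$ converges absolutely, a Hardy--Landau diophantine selection of $y$ which aligns the cosines on a long initial segment (the tail being controlled by absolute convergence) produces arbitrarily large values of both signs.

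The principal obstacle is the $\Omega_\pm$-statement for $n=0$. The Mellin residue calculus cleanly supplies the main term and a workable contour integral, but to extract genuine two-sided oscillation one must bring out real oscillatory content from $Z$ along the critical line. Making the Voronoi/Bessel representation of the Riesz mean rigorous, and verifying that the resulting absolutely convergent weighted cosine series is unbounded on both sides rather than merely oscillatory in some weaker sense, is where the delicate part of the argument sits.
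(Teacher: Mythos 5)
Your contour-shift strategy for $n\ge 1$ is the same as the paper's for $n\ge 2$, but two of your quantitative claims do not hold as stated. First, the error accounting for the uniform $\mathcal{O}(2^{-n}x^{1/4})$ bound fails: you bound the shifted integral by $\Gamma(n+\tfrac32)\,y^{n}$ and then multiply by $|A_n|\asymp 2^{-n}n^{-3/2}x^{1/4-n/2}$, which gives $\Gamma(n+\tfrac32)\,2^{-n}n^{-3/2}x^{1/4}$ --- a quantity that \emph{diverges} in $n$, so the series $\sum_n \mathcal{S}(\sqrt{x};n)$ could not be summed. The point is that the estimate $|\Gamma(s)/\Gamma(s+n+\tfrac32)|\ll (1+|t|)^{-(n+3/2)}$ with an absolute constant is far too lossy near $t=0$, where the true size of $\Gamma(n+\tfrac32)|\Gamma(-\tfrac12+it)/\Gamma(n+1+it)|$ is only $\asymp n^{1/2}$. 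One must factor the Gamma ratio as $\frac{\Gamma(s)\Gamma(3/2)}{\Gamma(s+3/2)}\prod_{k=1}^{n}(1+\tfrac{s}{k+1/2})^{-1}$, peel off two factors of the product to gain decay in $|t|$, and bound the rest by a quantity growing only polynomially in $n$, so that the $2^{-n}$ from the prefactor survives; this is exactly the computation in \eqref{eq:2.14}--\eqref{eq:2.15}. Second, for $n=1$ the integral on $\Re(s)=-\tfrac12$ does not converge absolutely: the functional equation forces $|Z(-\tfrac12+it)|$ to be of genuine size $|t|^{2}$, while $|\Gamma(s)/\Gamma(s+\tfrac52)|\asymp|t|^{-5/2}$, so the integrand is only $\mathcal{O}(|t|^{-1/2})$. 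You would need either a shift to $\Re(s)=-\delta$ with a Phragm\'en--Lindel\"of interpolation, or a truncated contour with a cancellation argument; the paper instead disposes of $n=1$ by quoting the Chandrasekharan--Narasimhan bound \eqref{eq:2.17} for the Riesz mean of order $\varrho=\tfrac32$.

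The more serious issue is that the heart of the lemma --- the two-sided unboundedness \eqref{eq:2.4} of $Q(\sqrt{x})$ --- is precisely the part you leave as a sketch. Your plan (Voronoi/Bessel expansion of the order-$\tfrac12$ Riesz mean, then a Hardy--Landau alignment of the phases $4\pi\sqrt{my}$ using the $\mathbb{Q}$-linear independence of the $\sqrt{m}$) is the classically correct route and would in principle work, since the absolutely convergent series $\sum r_2(m)m^{-5/4}$ has nonnegative coefficients; but justifying the Bessel expansion at the boundary exponent $\varrho=\tfrac12$ and running the Kronecker--Weyl selection to force both signs is a substantial argument that you have not supplied. The paper avoids reproving this by citing the Chandrasekharan--Narasimhan $\Omega_{\pm}$ theorem for Riesz means of order $\varrho\le\tfrac12$ (stated here as \eqref{eq:2.18}), which applies verbatim to $Z(s)$ and immediately yields \eqref{eq:2.21} and hence \eqref{eq:2.4}. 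Your pointwise bound \eqref{eq:2.5} via Stieltjes integration by parts against the circle-problem error is fine (even the trivial $\Delta(u)=\mathcal{O}(u^{1/2})$ suffices), and your residue computations and the closed form for $(-1)^n\binom{1/2}{n}$ match the paper's $c_n$; but as it stands the proposal has genuine gaps at the uniformity in $n$, at $n=1$, and at the $\Omega_{\pm}$ statement.
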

\begin{proof}
We fix some $n\in\mathbb{N}_{\geq 0}$, and split the proof into two cases as follows:\\\\
\textit{\textbf{Case 1}}: $n\geq 2$.\\\\
We first begin by writing
\begin{equation}\label{eq:2.6}
\sum_{0\leq m\leq\,\sqrt[]{x}}r_2(m)(\sqrt[]{x}-m)^{n+\frac{1}{2}}\,=\,x^{\frac{n}{2}+\frac{1}{4}}+\sum_{m=1}^{\infty}r_2(m)g(m)
\end{equation}
where 
\[g(y)=\left\{
        \begin{array}{ll}
            (\sqrt[]{x}-y)^{n+\frac{1}{2}}& ; \,0< y<\, \sqrt[]{x} \\\\
            0 & ;\, otherwise
        \end{array}
    \right.\]
\\Of course $g(y)$ depends on $n$, but we do not display it.
\\\\The Mellin transform of $g$, valid for $\Re(s)>0$, is given by
$$\int_{0}^{\infty}g(y)y^{s-1}dy=x^{\frac{n}{2}+\frac{1}{4}+\frac{s}{2}}\int_{0}^{1}(1-y)^{n+\frac{1}{2}}y^{s-1}dy=x^{\frac{n}{2}+\frac{1}{4}}G(s)x^{\frac{s}{2}}$$
where 
$$G(s)=\frac{\Gamma(s) \Gamma(n+\frac{3}{2})}{\Gamma(s+n+\frac{3}{2})}$$
\\with $\Gamma(s)$ being the familiar Gamma function.
\\\\For later use, let us record the following estimate for the Gamma function (see \cite{iwaniec2004analytic} 5.A (5.113))
$$\Gamma(u+it)=\sqrt[]{2\pi}(it)^{u-\frac{1}{2}}e^{-\frac{\pi}{2}|t|}\bigg(\frac{|t|}{e}\bigg)^{it}\bigg(1+\mathcal{O}\bigg(\frac{1}{|t|}\bigg) \bigg)$$
\\wich is valid uniformly for $\alpha<u<\beta$ with any fixed $\alpha,\beta\in\mathbb{R}$, provided $|t|$ is large enough in terms of $\alpha$ and $\beta$.  
\\\\By Mellin inversion, we get the following integral representation valid for $y\in\mathbb{R}_{+}$ 
$$g(y)=\frac{x^{\frac{n}{2}+\frac{1}{4}}}{2\pi i}\int_{(\sigma)}^{}G(s)x^{\frac{s}{2}}y^{-s}ds$$
with, say, $\sigma=1+\frac{1}{\log{x}}$. Note that the estimate for $\Gamma(s)$ implies the absolute convergence of the above integral.\\\\
Substituting $y=m$ into this last equation, multiplying by $r_{2}(m)$ and then summing over all $m\in\mathbb{N}$, we arrive at
\begin{equation}\label{eq:2.7}
\sum_{m=1}^{\infty}r_2(m)g(m)=\frac{x^{\frac{n}{2}+\frac{1}{4}}}{2\pi i}\int_{(\sigma)}^{}G(s)x^{\frac{s}{2}}Z(s)ds
\end{equation}
where
$$Z(s)=\sum_{m=1}^{\infty}\frac{r_2(m)}{m^{s}}\,.$$
Here, we have interchanged the order of summation and integration which is justified since $\sigma>1$.\\\\
The Zeta function $Z(s)$, initially defined for $\Re(s)>1$, admits an analytic continuation to the entire complex plane, except at $s=1$ where it has a simple pole with residue $\pi$, and satisfies the following functional equation (see \cite{iwaniec2004analytic} 5.10)
\begin{equation}\label{eq:2.8}
\pi^{-s}\Gamma(s)Z(s)=\pi^{-(1-s)}\Gamma(1-s)Z(1-s)\,.
\end{equation}
In the strip $-\frac{1}{2}\leq \Re(s)\leq\sigma$, we have the following convexity bound due to H.Rademacher (see  \cite{rademacher1959phragmen} Theorem 4 (7.4))
\begin{equation}\label{eq:2.9}
|Z(s)|\ll|s|^{2}\,,\,\text{as}\,\,|\Im(s)|\to\infty\,.
\end{equation}
\\Now we are ready to evaluate \eqref{eq:2.7}. Write $\mathcal{C}$ for the positively oriented rectangle contour with vertices $\{-\frac{1}{2}\pm i\infty,\,\sigma\pm i\infty\}$.\\\\
$G(s)x^{\frac{s}{2}}Z(s)$ has two simple poles in the interior of $\mathcal{C}$ located at $s=0,1$. By \eqref{eq:2.8} we compute:
$$Res_{s=1}G(s)x^{\frac{s}{2}}Z(s)=\frac{\Gamma(1) \Gamma(n+\frac{3}{2})}{\Gamma(1+n+\frac{3}{2})}x^{\frac{1}{2}}\lim_{s\to 1}(s-1)Z(s)=\frac{\pi}{n+\frac{3}{2}}x^{\frac{1}{2}}$$
$$Res_{s=0}G(s)x^{\frac{s}{2}}Z(s)=\lim_{s\to 0}s\Gamma(s)Z(s)=-1$$
\\By the convexity bound \eqref{eq:2.9}, together with the estimate given for the Gamma function, it follows that
$$|G(s)x^{\frac{s}{2}}Z(s)|\to\ 0\,,\,\text{as}\,\,|\Im(s)|\to\infty$$
in the strip $-\frac{1}{2}\leq \Re(s)\leq\sigma$.
\\\\Hence, by moving the line of integration to $\Re(s)=-\frac{1}{2}$, we have by the theorem of residues
\begin{equation}\label{eq:2.10}
\frac{1}{2\pi i}\int_{(\sigma)}^{}G(s)x^{\frac{s}{2}}Z(s)ds=-1+\frac{\pi}{n+\frac{3}{2}}x^{\frac{1}{2}}+\frac{1}{2\pi i}\int_{(-\frac{1}{2})}^{}G(s)x^{\frac{s}{2}}Z(s)ds\,.
\end{equation}
\\\\Multiplying \eqref{eq:2.10} by $x^{\frac{n}{2}+\frac{1}{4}}$, \eqref{eq:2.7} takes the form
\begin{equation}\label{eq:2.11}
\sum_{m=1}^{\infty}r_2(m)g(m)=\,-x^{\frac{n}{2}+\frac{1}{4}}+\frac{\pi}{n+\frac{3}{2}}x^{\frac{n}{2}+\frac{3}{4}} +\frac{x^{\frac{n}{2}+\frac{1}{4}}}{2\pi i}\int_{(-\frac{1}{2})}^{}G(s)x^{\frac{s}{2}}Z(s)ds\,.
\end{equation}
\\\\Inserting \eqref{eq:2.11} back into \eqref{eq:2.6}, we derive 
\begin{equation}\label{eq:2.12}
\sum_{0\leq m\leq\,\sqrt[]{x}}r_2(m)(\sqrt[]{x}-m)^{n+\frac{1}{2}}\,=\,\frac{\pi}{n+\frac{3}{2}}x^{\frac{n}{2}+\frac{3}{4}}+\frac{x^{\frac{n}{2}+\frac{1}{4}}}{2\pi i}\int_{(-\frac{1}{2})}^{}G(s)x^{\frac{s}{2}}Z(s)ds\,.
\end{equation}
Now
$$\frac{(-1)^{n}}{n!}f^{(n)}(2x^{\frac{1}{2}})=-\frac{\prod_{k=1}^{n-1}(1-\frac{1}{2k})}{\sqrt[]{2}\,2^{n}n}x^{\frac{1}{4}-\frac{n}{2}}$$
and by setting
\[c_n=-\pi\frac{\prod_{k=1}^{n-1}(1-\frac{1}{2k})}{\sqrt[]{2}\,2^{n}n(n+\frac{3}{2})}\]
\\we arrive at the following identity (after multiplying \eqref{eq:2.12} by $\frac{(-1)^{n}}{n!}f^{(n)}(2x^{\frac{1}{2}})$)
\begin{equation}\label{eq:2.13}
\mathcal{S}(\,\sqrt[]{x}\,;n)=c_nx + \frac{C_nx^{\frac{1}{2}}}{2\pi i}\int_{(-\frac{1}{2})}^{}G(s)x^{\frac{s}{2}}Z(s)ds\,
\end{equation}
where
\[C_n=-\frac{\prod_{k=1}^{n-1}(1-\frac{1}{2k})}{\sqrt[]{2}\,2^{n}n}\,.\]
\\It remains to estimate the second term on the RHS of \eqref{eq:2.13}.\\\\ 
By repeated use of the functional equation for the Gamma function, we have
$$G(s)=\frac{\Gamma(s) \Gamma(\frac{3}{2})}{\Gamma(s+\frac{3}{2})}\prod_{k=1}^{n}\frac{k+\frac{1}{2}}{s+k+\frac{1}{2}}=\frac{\Gamma(s) \Gamma(\frac{3}{2})}{\Gamma(s+\frac{3}{2})}\prod_{k=1}^{n}\frac{1}{1+\frac{s}{k+\frac{1}{2}}}$$
thus
$$|G(s)|=\bigg|\frac{\Gamma(s) \Gamma(\frac{3}{2})}{\Gamma(s+\frac{3}{2})}\bigg|\prod_{k=1}^{n}\frac{1}{|1+\frac{s}{k+\frac{1}{2}}|}\,.$$
Assume $\Re(s)=-\frac{1}{2}$. Isolating the first 2 terms appearing in the above product, and estimating the remaining ones trivially, we get
\begin{equation}\label{eq:2.14}
|G(s)|\leq\frac{\sqrt[]{\pi}}{2}\bigg(\prod_{k=3}^{n}\frac{1}{1-\frac{1}{2k+1}}\bigg)\bigg|\frac{\Gamma(s)}{\Gamma(s+\frac{3}{2})(1+\frac{2}{3}s)(1+\frac{2}{5}s)}\bigg|
\end{equation}
where the first product appearing above may be empty in case $n=2$.\\\\
By \eqref{eq:2.14}, together with the bound
$$\bigg|C_n\prod_{k=3}^{n}\frac{1}{1-\frac{1}{2k+1}}\bigg|\leq 2^{-n}$$
we conclude that
\begin{equation}\label{eq:2.15}
\bigg|\frac{C_n x^{\frac{1}{2}}}{2\pi i}\int_{(-\frac{1}{2})}^{}G(s)x^{\frac{s}{2}}Z(s)ds\bigg|\leq 2^{-n}x^{\frac{1}{4}}\int_{(-\frac{1}{2})}^{}\bigg|\frac{\Gamma(s)Z(s)}{\Gamma(s+\frac{3}{2})(1+\frac{2}{3}s)(1+\frac{2}{5}s)}\bigg|ds
\end{equation}
\\The integral appearing on the RHS of \eqref{eq:2.15} is independent of $n$ and converges absolutely by \eqref{eq:2.9}, hence case 1 is proved.\\\\
\textit{\textbf{Case 2}}: $n=0,1$.\\\\
To handle this case, we shall appeal to two estimates proved by K. Chandrasekharan and Raghavan Narasimhan. These estimates, which are a special case of a much greater theory, can be found in \cite{chandrasekharan1961hecke} and \cite{chandrasekharan1962functional}. In order to state their results, we begin by setting up some notations.\\\\
For $\varrho>0$ and $y>0$, let $E_{\varrho}(y)$ be defined by
\begin{equation}\label{eq:2.16}
\sum_{0\leq m\leq y}r_2(m)(y-m)^{\varrho}=\frac{\Gamma(\varrho+1)}{\Gamma(\varrho+2)}\pi y^{\varrho+1}+E_{\varrho}(y)\,.
\end{equation}
Chandrasekharan and Narasimhan have considered $\tilde{E_{\varrho}}(y)= E_{\varrho}(y)/\Gamma(\varrho+1)$ instead, but this will make no difference for us since we are interested in evaluating \eqref{eq:2.16} only for the values $\varrho=\frac{1}{2},\frac{3}{2}$. Thus, the dependency of the forthcoming estimates with respect to the variable $\varrho$ will not concern us. In order to be consistent with \cite{chandrasekharan1961hecke}, we shall state their results in terms of $\tilde{E_{\varrho}}(y)$. With the above notation, we have the following estimates (see \cite{chandrasekharan1961hecke} P.502 case (iii))\\\\
\begin{equation}\label{eq:2.17}
|\tilde{E_{\varrho}}(y)|\ll y^{\frac{1}{2}(\varrho+\frac{1}{2})}
\end{equation}
\\provided $\varrho>\frac{1}{2}$ where the implied constant may depend on $\varrho$, while for $\varrho\leq\frac{1}{2}$ we have
\begin{equation}\label{eq:2.18}
\limsup_{y\to\infty}y^{-\frac{1}{2}(\varrho+\frac{1}{2})}\tilde{E_{\varrho}}(y)=\infty,\,\,\,\liminf_{y\to\infty}y^{-\frac{1}{2}(\varrho+\frac{1}{2})}\tilde{E_{\varrho}}(y)=-\infty\,.
\end{equation}
\\ Now we specialize to our case in hand, namely $y=\,\sqrt[]{x}$ and $\varrho=n+\frac{1}{2}$ with $n=0,1$.
\\\\Writing
\begin{equation}\label{eq:2.19}
\sum_{0\leq m\leq\,\sqrt[]{x}}r_2(m)(\sqrt[]{x}-m)^{n+\frac{1}{2}}=\frac{\pi}{n+\frac{3}{2}}x^{\frac{n}{2}+\frac{3}{4}}+E_{n+\frac{1}{2}}(\,\sqrt[]{x\,}\,)
\end{equation}
\\we have by \eqref{eq:2.17} and \eqref{eq:2.18} the following estimates
\begin{equation}\label{eq:2.20}
|E_{\frac{3}{2}}(\,\sqrt[]{x\,}\,)|\ll\,x^{\frac{1}{2}}
\end{equation}
\begin{equation}\label{eq:2.21}
\limsup_{x\to\infty}x^{-\frac{1}{4}}E_{\frac{1}{2}}(\,\sqrt[]{x\,}\,)=\infty,\,\,\,\liminf_{x\to\infty}x^{-\frac{1}{4}}E_{\frac{1}{2}}(\,\sqrt[]{x\,}\,)=-\infty
\end{equation}
\\Note that by partial summation we have the easy bound
\begin{equation}\label{eq:2.22}
\big|E_{\frac{1}{2}}(\,\sqrt[]{x\,}\,)\big|\ll x^{\frac{1}{2}}\,.
\end{equation}
\\Multiplying \eqref{eq:2.19} by $\frac{(-1)^{n}}{n!}f^{(n)}(2x^{\frac{1}{2}})$, \eqref{eq:2.20} gives in the case of $n=1$
\begin{equation}
\mathcal{S}(\,\sqrt[]{x}\,;1)=c_1x +O(x^{\frac{1}{4}})
\end{equation}
\\ where $c_1=-\frac{\pi}{5\,\sqrt[]{2}}$, while for $n=0$ we have
\begin{equation}
\mathcal{S}(\,\sqrt[]{x}\,;0)=c_0x + \sqrt[]{2}x^{\frac{1}{4}}E_{\frac{1}{2}}(\,\sqrt[]{x\,}\,)=c_0x + x^{\frac{1}{2}}Q(\,\sqrt[]{x}\,)
\end{equation}
\\ where $c_0=\frac{2^{\frac{3}{2}}\pi}{3}$, and $Q(\,\sqrt[]{x}\,)=\,\sqrt[]{2}x^{-\frac{1}{4}}E_{\frac{1}{2}}(\,\sqrt[]{x\,}\,)$.\\\\
By \eqref{eq:2.21} and \eqref{eq:2.22} we have
\begin{equation}\label{eq:2.25}
\limsup_{x\to\infty}Q(\,\sqrt[]{x}\,)=\infty,\,\,\,\liminf_{x\to\infty}Q(\,\sqrt[]{x}\,)=-\infty
\end{equation}
and
\begin{equation}\label{eq:2.26}
|Q(\,\sqrt[]{x}\,)|\ll x^{\frac{1}{4}}.
\end{equation}
Thus case 2 is proved.
\end{proof}
\section{Proof of Theorem 1}
\begin{proof}[\textit{Proof} (Theorem 1)]
\begin{equation}\label{eq:3.1}
\mathcal{S}(x)=\sum_{0\leq m^2+n^2<\, x}r_2(m)=2\mathcal{M}(x)+\mathcal{O}(\,\sqrt[]{x}\,)
\end{equation}
where 
$$\mathcal{M}(x)=\sum_{0\leq m\leq\, \sqrt[]{x}}r_2(m)(x-m^{2})^{\frac{1}{2}}\,.$$
Using Lemma 1, and switching the order of summation, we have
\begin{equation}
\mathcal{M}(x)=\sum_{n=0}^{\infty}\mathcal{S}(\,\sqrt[]{x}\,;n)\,.
\end{equation}
Setting $$c=\sum_{n=0}^{\infty}c_n$$
where the sum clearly converges (see \eqref{eq:2.2}), we have by Lemma 2 \eqref{eq:2.1}, \eqref{eq:2.3}
\begin{equation}\label{eq:3.3}
\mathcal{M}(x)= cx + x^{\frac{1}{2}}Q(\,\sqrt[]{x}\,) +\mathcal{O}(x^{\frac{1}{4}})\,.
\end{equation}
Inserting \eqref{eq:3.3} back into \eqref{eq:3.1}, we derive
\begin{equation}\label{eq:3.4}
S(x)= 2cx + 2x^{\frac{1}{2}}Q(\,\sqrt[]{x}\,) +\mathcal{O}(x^{\frac{1}{2}})\,.
\end{equation}
By \eqref{eq:2.5}, the second term appearing on the RHS of (3.4) is bounded by $x^{\frac{3}{4}}$, and since $\mathcal{S}(x)\sim \frac{\pi^{2}}{2}x$ as $x\to\infty$, it follows that $2c=\frac{\pi^{2}}{2}$.\\
Subtracting $\frac{\pi^{2}}{2}x$ from both sides of \eqref{eq:3.4}, and dividing throughout by $x^{\frac{1}{2}}$, we arrive at
\begin{equation}
\frac{\mathcal{E}(x)}{x^{\frac{1}{2}}}=2Q(\,\sqrt[]{x}\,)+\mathcal{O}(1)\,.
\end{equation}
Taking $\limsup$ and $\liminf$, and using \eqref{eq:2.4}, concludes the proof.
\end{proof}
\textit{\textbf{Concluding remarks}}. Our results remain valid in the case where the norm is taken to be $\mathcal{N}_{4,A}$ for any $A>0$, as long as we consider this variable to be fixed. If we where to allow $A$ to vary with $x$, then the question of uniformity comes into play, and it would be an interesting problem to consider. Likewise, one may consider the lattice point counting problem when one or both of the variables of summation are restricted by congruence conditions to various moduli which are allowed to vary with $x$. It should be mentioned though, that the value of $\alpha=4$ was crucial to our arguments for obtaining the desired $\Omega_{\pm}$ estimates, while for different values we would have to settle for upper-bounds. 


Let us now say a few words about higher dimensional Heisenberg groups, where the norm is still the Cygan-Kor$\acute{a}$nyi Heisenberg-norm. As in the case of $\mathcal{H}_{1}$, the sum under consideration takes the form
$$\mathcal{S}_{q}(x)=\sum_{0\leq m^2+n^2<\, x}r_{2q}(m)=\sum_{l<\, x}\rho_{q}(l)$$
where 
$$\rho_{q}(l)=\sum_{m^2+n^2=l}r_{2q}(m)\,.$$
Now, for integers $l$ such that $r_{2}(l)=0$ we have $\rho_{q}(l)=0$ no matter what the value of $q$ is, and thus they do not make any contribution to the sum $\mathcal{S}_{q}(x)$, while those for which $r_{2}(l)\neq0$ will have a larger contribution as $q$ grows due to the presence of $r_{2q}(\cdotp)$ in $\rho_{q}(\cdotp)$. Hence, $\rho_{q}(\cdotp)$ viewed as a function of $q$ will exhibit higher spikes as $q$ increases, making the task of obtaining $\Omega_{\pm}$ estimates for the error term $\mathcal{E}_{q}(x)$ much harder. This is in sharp contrast to the case of counting lattice points in Euclidean balls, where the function $r_{k}(\cdotp)$ exhibits more of a regular  behavior as the dimension $k$ increases, and as soon as $k\geq 4$ one has a complete solution to the lattice point counting problem in this case.
\bibliographystyle{plain}
\bibliography{refs.bib}

\begin{thebibliography}{10}

\bibitem{chamizo1998lattice}
Fernando Chamizo.
\newblock Lattice points in bodies of revolution.
\newblock {\em Acta Arithmetica}, 85(3):265--277, 1998.

\bibitem{chandrasekharan1961hecke}
K~Chandrasekharan and Raghavan Narasimhan.
\newblock Hecke's functional equation and the average order of arithmetical
  functions.
\newblock {\em Acta arithmetica}, 6(4):487--503, 1961.

\bibitem{chandrasekharan1962functional}
K~Chandrasekharan and Raghavan Narasimhan.
\newblock Functional equations with multiple gamma factors and the average
  order of arithmetical functions.
\newblock {\em Annals of Mathematics}, pages 93--136, 1962.

\bibitem{cowling2010unitary}
Michael Cowling.
\newblock Unitary and uniformly bounded representations of some simple lie
  groups.
\newblock In {\em Harmonic Analysis and Group Representation}, pages 50--128.
  Springer, 2010.

\bibitem{cygan1978wiener}
Jacek Cygan.
\newblock Wiener's test for the brownian motion on the heisenberg group.
\newblock In {\em Colloquium Mathematicae}, volume~39, pages 367--373.
  Institute of Mathematics Polish Academy of Sciences, 1978.

\bibitem{cygan1981subadditivity}
Jacek Cygan.
\newblock Subadditivity of homogeneous norms on certain nilpotent lie groups.
\newblock {\em Proceedings of the American Mathematical Society}, 83(1):69--70,
  1981.

\bibitem{garg2015lattice}
Rahul Garg, Amos Nevo, and Krystal Taylor.
\newblock The lattice point counting problem on the heisenberg groups [le
  probl{\`e}me d{\'e}nombrement des points d’un r{\'e}seau dans les groupes
  de heisenberg].
\newblock In {\em Annales de l'institut Fourier}, volume~65, pages 2199--2233,
  2015.

\bibitem{gorodnik2012counting}
Alexander Gorodnik and Amos Nevo.
\newblock Counting lattice points.
\newblock {\em Journal f{\"u}r die reine und angewandte Mathematik (Crelles
  Journal)}, 2012(663):127--176, 2012.

\bibitem{heath1999lattice}
Roger Heath-{B}rown.
\newblock Lattice points in the sphere.
\newblock In {\em In-Number theory in progress}. Citeseer, 1999.

\bibitem{herz1962number}
Carl.~S Herz.
\newblock On the number of lattice points in a convex set.
\newblock {\em American Journal of Mathematics}, 84(1):126--133, 1962.

\bibitem{hlawka1950integrale}
Edmund Hlawka.
\newblock {\"U}ber integrale auf konvexen k{\"o}rpern i.
\newblock {\em Monatshefte f{\"u}r Mathematik}, 54(1):1--36, 1950.

\bibitem{huxley2003exponential}
Martin~N Huxley.
\newblock Exponential sums and lattice points iii.
\newblock {\em Proceedings of the London Mathematical Society}, 87(3):591--609,
  2003.

\bibitem{ivic2006lattice}
A~Ivi{\'c}, E~Kr{\"a}tzel, M~K{\"u}hleitner, and WG~Nowak.
\newblock Lattice points in large regions and related arithmetic functions:
  recent developments in a very classic topic (english summary), elementare und
  analytische zahlentheorie, 89--128, schr. wiss. ges. johann wolfgang goethe
  univ. frankfurt am main, 20, 2006.

\bibitem{iwaniec2004analytic}
Henryk Iwaniec and Emmanuel Kowalski.
\newblock {\em Analytic number theory}, volume~53.
\newblock American Mathematical Soc., 2004.

\bibitem{koranyi1985geometric}
Adam Kor{\'a}nyi.
\newblock Geometric properties of {H}eisenberg-type groups.
\newblock {\em Advances in Mathematics}, 56(1):28--38, 1985.

\bibitem{kratzel1999lattice}
Ekkehard Kr{\"a}tzel.
\newblock Lattice points in super spheres.
\newblock {\em Commentationes Mathematicae Universitatis Carolinae},
  40(2):373--392, 1999.

\bibitem{kratzel2002lattice1}
Ekkehard Kr{\"a}tzel.
\newblock Lattice points in some special three-dimensional convex bodies with
  points of {G}aussian curvature zero at the boundary.
\newblock {\em Comment. Math. Univ. Carolin}, 43(4):755--771, 2002.

\bibitem{kratzel2002lattice2}
Ekkehard Kr{\"a}tzel.
\newblock Lattice points in three-dimensional convex bodies with points of
  {G}aussian curvature zero at the boundary.
\newblock {\em Monatshefte f{\"u}r Mathematik}, 137(3):197--211, 2002.

\bibitem{atzel1988lattice}
Ekkehard Krätzel.
\newblock Lattice points, {K}luwer, {D}ordrecht {B}oston {L}ondon, 1988.

\bibitem{rademacher1959phragmen}
Hans Rademacher.
\newblock On the {P}hragm{\'e}n-{L}indel{\"o}f theorem and some applications.
\newblock {\em Mathematische Zeitschrift}, 72(1):192--204, 1959.

\bibitem{stein1999harmonic}
Elias~{M}. {S}tein.
\newblock Harmonic analysis real-variable methods, orthogonality, and
  oscillatory integrals.
\newblock {\em Bull. Amer. Math. Soc}, 36:505--521, 1999.

\end{thebibliography}

\end{document}